\makeatletter \@namedef{subjclassname@2010}{
  \textup{2010} Mathematics Subject Classification}
\newtheorem{thm}{Theorem}[section]
\newtheorem{cor}[thm]{Corollary}
\newtheorem{lem}[thm]{Lemma}
\newtheorem{pro}[thm]{Proposition}
\theoremstyle{remark}
\newtheorem*{rema}{Remark}
\newtheorem{exa}[thm]{\textbf{Example}}
\theoremstyle{definition}
\newtheorem*{defn}{Definition}
\begin{document}

\title{Generalizations of Reid Inequality}
\author[S. DEHIMI and M. H. MORTAD]{Souheyb Dehimi and Mohammed Hichem Mortad$^*$}

\thanks{* Corresponding author.}
\date{}
\keywords{Positive and Hyponormal (bounded and unbounded) Operators.
Square Roots. Reid Inequality.}

\subjclass[2010]{Primary 47A63, Secondary 47A05.}

\address{Department of
Mathematics, University of Oran 1, Ahmed Ben Bella, B.P. 1524, El
Menouar, Oran 31000, Algeria.\newline {\bf Mailing address}:
\newline Pr Mohammed Hichem Mortad \newline BP 7085 Seddikia Oran
\newline 31013 \newline Algeria}

\email{sohayb20091@gmail.com} \email{mhmortad@gmail.com,
mortad@univ-oran.dz.}

\begin{abstract}In this paper, we improve the famous Reid Inequality related
to linear operators. Some monotony results for positive operators
are also established with a different approach from what is known in
the existing literature. Lastly, Reid and Halmos-Reid inequalities
are extended to unbounded operators.
\end{abstract}

\maketitle

\section{Introduction}
First, assume that readers are familiar with notions and result on
$B(H)$. We do recall a few definitions and results though:
\begin{enumerate}
  \item Let $A\in B(H)$. We say that $A$ is positive (we then write $A\geq 0$) if
  \[<Ax,x>\geq0,~\forall x\in H.\]
  \item For every positive operator $A\in B(H)$, there is a unique positive $B\in B(H)$
  such that $B^2=A$. We call $B$ the positive square root of $A$.
  \item The absolute value of $A\in B(H)$ is defined to be the
  (unique) positive square root of the positive operator $A^*A$.
  We denote it by $|A|$.
  \item If $A\geq B\geq 0$, then $\sqrt A\geq \sqrt B$ (a particular
  case of the so-called Heinz Inequality).
  \item We say that $A\in B(H)$ is hyponormal if $AA^*\leq
  A^*A$. Equivalently, $\|A^*x\|\leq\|Ax\|$ for all $x\in H$.
  \item We also need the following lemma:
  \begin{lem}\label{Lemma MAIN LEMMA}(see e.g. \cite{Young-BOOK-HIlbert}) Let $H$ be a complex Hilbert
space. If $A,B\in B(H)$, then
\[\forall x\in H:~\|Ax\|\leq \|Bx\|\Longleftrightarrow \exists K\in B(H) \text{ contraction}:~A=KB.\]
\end{lem}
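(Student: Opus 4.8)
The plan is to prove the two implications separately, with the reverse direction being immediate and the forward direction requiring an explicit construction of the contraction $K$.

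For the easy implication $(\Leftarrow)$, I would assume $A=KB$ with $\|K\|\leq 1$ and simply estimate, for every $x\in H$,
\[
\|Ax\|=\|KBx\|\leq \|K\|\,\|Bx\|\leq \|Bx\|.
\]
This settles that direction with no further work.

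For the forward implication $(\Rightarrow)$, the idea is to define $K$ first on the range of $B$ by the prescription $K(Bx):=Ax$, and then to extend it by continuity and by zero. The first thing to verify is that this prescription is well defined: if $Bx_1=Bx_2$, then $\|A(x_1-x_2)\|\leq\|B(x_1-x_2)\|=0$, so $Ax_1=Ax_2$. The hypothesis then shows at once that $K$ is a contraction on $\ran B$, since $\|K(Bx)\|=\|Ax\|\leq\|Bx\|$. Being a bounded linear map on $\ran B$, it extends uniquely and continuously to a contraction on the closure $\overline{\ran B}$.

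Finally, I would extend $K$ to all of $H$ by setting it equal to $0$ on the orthogonal complement $(\overline{\ran B})^{\perp}$, combining the two pieces through the orthogonal decomposition $H=\overline{\ran B}\oplus(\overline{\ran B})^{\perp}$. For $z=z_1+z_2$ with $z_1\in\overline{\ran B}$ and $z_2\perp\overline{\ran B}$, the Pythagorean identity gives
\[
\|Kz\|^2=\|Kz_1\|^2\leq \|z_1\|^2\leq \|z\|^2,
\]
so the global $K$ is a contraction on $H$. By construction $KBx=Ax$ for all $x$, that is, $A=KB$, as required. The main obstacle is really the well-definedness of $K$ on $\ran B$; once that is secured from the norm inequality, the contraction bound and the extension are routine. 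One subtlety worth flagging is that the continuous extension to $\overline{\ran B}$ is genuinely needed, because $\ran B$ itself need not be closed; the zero extension on the orthogonal complement then makes $K$ everywhere defined without enlarging its norm.
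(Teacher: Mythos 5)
Your proof is correct. The paper does not actually prove this lemma --- it is quoted from Young's book --- so there is nothing to compare line by line; but your construction (define $K$ on $\ran B$ by $K(Bx)=Ax$, check well-definedness and the contraction bound from the hypothesis, extend continuously to $\overline{\ran B}$ and by zero on its orthogonal complement) is the standard argument and is exactly the technique the paper itself deploys when proving its refinement, Lemma \ref{Lemma MAIN Stochel improvement LEMMA}, where $K_0=AB^{-1}$ is defined on the range of $B$ and then extended contractively to all of $H$. The only step you gloss over is the linearity of $K$ on $\ran B$, which is immediate from well-definedness together with the linearity of $A$ and $B$, so the argument is complete.
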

\end{enumerate}

The inequality of Reid which first appeared in
\cite{Reid-Inequality} is recalled next:

\begin{thm}\label{Redi classic THM}
Let $A,K\in B(H)$ be such that $A$ is positive and $AK$ is
self-adjoint. Then
\[|<AKx,x>|\leq ||K||<Ax,x>\]
for all $x\in H$.
\end{thm}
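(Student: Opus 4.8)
The plan is to reduce to a normalized case and then run a self‑improving Cauchy–Schwarz iteration. If $K=0$ the inequality is trivial, so assume $K\neq 0$. Since $A\bigl(K/\|K\|\bigr)=\|K\|^{-1}AK$ is again self-adjoint while $A\geq 0$ is unchanged, it suffices to prove the inequality under the extra normalization $\|K\|\leq 1$, namely $|\langle AKx,x\rangle|\leq\langle Ax,x\rangle$; rescaling $K\mapsto\|K\|K$ then recovers the general statement.

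The structural fact I would exploit is that $AK$ self-adjoint together with $A^*=A$ forces $AK=(AK)^*=K^*A$, whence by induction $AK^n=(K^*)^nA$ for every $n\geq 1$. Setting $a_n:=\langle AK^nx,x\rangle$, this identity first shows each $a_n$ is real, since $a_n=\langle (K^*)^nAx,x\rangle=\langle Ax,K^nx\rangle=\overline{\langle AK^nx,x\rangle}=\overline{a_n}$, and moreover $a_{2n}=\langle AK^nx,K^nx\rangle=\|\sqrt{A}\,K^nx\|^2\geq 0$. Now apply the Cauchy–Schwarz inequality for the positive semidefinite sesquilinear form $(u,v)\mapsto\langle Au,v\rangle$ (legitimate because $A\geq 0$) with $u=K^nx$ and $v=x$: using $\langle AK^nx,K^nx\rangle=\langle Ax,K^{2n}x\rangle=a_{2n}$, this gives the key recursion $a_n^2\leq a_{2n}\,a_0$ for all $n\geq 1$.

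Iterating this dyadic doubling starting from $n=1$ (so $a_1^2\le a_2a_0$, then $a_2^2\le a_4a_0$, and so on) yields, for every $k\geq 1$,
\[
a_1^2\leq a_{2^k}^{\,1/2^{k-1}}\,a_0^{\,2-1/2^{k-1}} .
\]
Here the normalization enters decisively through the crude bound $0\leq a_{2^k}=\langle AK^{2^k}x,x\rangle\leq\|A\|\,\|K\|^{2^k}\|x\|^2\leq\|A\|\,\|x\|^2=:M$, which stays bounded precisely because $\|K\|\leq 1$. Assuming $M>0$ (otherwise $A=0$ or $x=0$ and the claim is trivial), monotonicity of $t\mapsto t^{1/2^{k-1}}$ gives $a_{2^k}^{\,1/2^{k-1}}\leq M^{1/2^{k-1}}\to 1$, while $a_0^{\,2-1/2^{k-1}}\to a_0^2$, so letting $k\to\infty$ produces $a_1^2\leq a_0^2$, i.e. $|\langle AKx,x\rangle|\leq\langle Ax,x\rangle$, as required.

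The step I expect to be the main obstacle is exactly this limiting argument: everything hinges on arranging the exponents so that the power of the large-but-bounded quantity $a_{2^k}$ tends to $1$. This is why both ingredients are indispensable — the normalization $\|K\|\leq 1$ keeps $a_{2^k}$ uniformly bounded, and the doubling $n\mapsto 2n$ (rather than $n\mapsto n+1$) is what drives the exponent $1/2^{k-1}$ to $0$ geometrically. One must also dispose of the degenerate cases $a_0=0$ or $M=0$ by hand, where the inequality holds trivially.
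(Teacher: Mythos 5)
Your proof is correct, but it takes a genuinely different route from the paper. The paper does not prove this classical statement directly; it obtains it as the special case of Theorem \ref{Main THM} (where $(AK)^*$ is only assumed hyponormal), whose proof runs through operator-level machinery: from $KK^*\leq\|K\|^2I$ one gets $|(AK)^*|^2=AKK^*A\leq\|K\|^2A^2$, hence $|(AK)^*|\leq\|K\|A$ by the Heinz (square-root monotonicity) inequality, and then Kittaneh's lemma $|\langle Tx,x\rangle|\leq\langle |T|x,x\rangle$ for hyponormal $T$ finishes the job. You instead use the elementary iterated Cauchy--Schwarz (dyadic doubling) argument on the scalars $a_n=\langle AK^nx,x\rangle$, exploiting $AK^n=(K^*)^nA$ and $a_n^2\leq a_{2n}a_0$; all the steps check out, including the positivity $a_{2n}=\|\sqrt{A}K^nx\|^2\geq 0$ needed to extract roots in the induction and the handling of the degenerate cases. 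The trade-off: the paper's argument reaches the strictly larger co-hyponormal class (where your identity $AK=K^*A$ is unavailable), while yours is self-contained — no Heinz, no Kittaneh, no square roots beyond $\sqrt{A}$ — and, if one replaces the crude bound $a_{2^k}\leq\|A\|\|K\|^{2^k}\|x\|^2$ by $\|K^{2^k}\|^{1/2^k}\to r(K)$ instead of normalizing, it directly yields the sharper Halmos--Reid bound $r(K)\langle Ax,x\rangle$. Indeed, the paper itself deploys exactly your doubling technique in its final theorem on the unbounded Halmos--Reid inequality, so your argument is the one the authors reserve for the spectral-radius refinement rather than for the basic norm estimate.
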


\begin{rema}
As shown in e.g. \cite{Lin-Reid-Furuta-inequality}, Reid Inequality
is equivalent to the operator monotony of the positive square root
on the set of positive operators.
\end{rema}

Halmos in \cite{halmos-book-1967} improved the inequality by
replacing $\|K\|$ by $r(K)$ where $r(K)$ is the usual spectral
radius. We shall call this the Halmos-Reid Inequality. Other
generalizations of Theorem \ref{Redi classic THM} are known in the
literature from which we only cite \cite{Kitanneh-REid-et al} and
\cite{Lin-Reid-Furuta-inequality}.

In an earlier version of this paper (see \cite{Mortad-Reid-Normal!
FIRST VERSION}), the corresponding author showed the following:

\begin{thm}\label{Main THM}
Let $A,K\in B(H)$ be such that $A$ is positive and $AK$ is normal.
Then
\[|<AKx,x>|\leq ||K||<Ax,x>\]
for all $x\in H$.
\end{thm}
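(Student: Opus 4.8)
The plan is to reduce the whole statement to a single operator inequality, namely $|AK|\le\|K\|\,A$, and then transfer it to the numerical range using the normality of $AK$. By homogeneity I would first rescale, replacing $K$ by $K/\|K\|$ (which leaves $A(K/\|K\|)$ normal), so that it suffices to treat the case $\|K\|\le 1$ and prove $|\langle AKx,x\rangle|\le\langle Ax,x\rangle$ for all $x$.

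The key observation is that normality of $AK$ means $(AK)(AK)^*=(AK)^*(AK)$. Since $A$ is positive (hence self-adjoint), $(AK)^*=K^*A$, so this identity reads $AKK^*A=K^*A^2K$. The point is that the left-hand side is easy to dominate: because $\|K\|\le 1$ we have $KK^*\le I$, and conjugating by the self-adjoint operator $A$ preserves order, giving $AKK^*A\le A^2$. Combining this with the normality identity yields $|AK|^2=(AK)^*(AK)=K^*A^2K=AKK^*A\le A^2$. At this point I would invoke the operator monotonicity of the square root (item (4), the Heinz inequality) to take square roots and conclude $|AK|\le A$. Note how normality is used essentially here: for a general $K$ one only controls $|(AK)^*|^2=AKK^*A$, and it is precisely the identity $|AK|^2=|(AK)^*|^2$ that pushes the bound onto $|AK|$ itself.

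It then remains to pass from the operator inequality $|AK|\le A$ to the scalar inequality for $\langle AKx,x\rangle$. For this I would use the fact that any normal operator $N$ satisfies $|\langle Nx,x\rangle|\le\langle|N|x,x\rangle$; this follows from the spectral theorem (or from the polar decomposition $N=U|N|$, in which the partial isometry $U$ commutes with $|N|$, so that $\langle Nx,x\rangle=\langle|N|^{1/2}x,|N|^{1/2}U^*x\rangle$ and Cauchy--Schwarz together with $\|U^*\|\le 1$ give the bound). Applying this to $N=AK$ and then using $|AK|\le A$ gives $|\langle AKx,x\rangle|\le\langle|AK|x,x\rangle\le\langle Ax,x\rangle$, which is the normalized assertion; undoing the scaling recovers the stated inequality. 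I expect this transfer to be the main obstacle: the clean operator bound $|AK|\le A$ does not by itself control the (possibly complex) number $\langle AKx,x\rangle$, and it is again normality—via the spectral theorem—that licenses the comparison $|\langle AKx,x\rangle|\le\langle|AK|x,x\rangle$. Everything else (the rescaling, the order inequality $AKK^*A\le A^2$, and the square-root step) is routine.
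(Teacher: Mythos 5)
Your proposal is correct and follows essentially the same route as the paper's: bound $AKK^*A$ above by $\|K\|^2A^2$, take square roots via the Heinz inequality, and then transfer the resulting operator inequality to the numerical range through $|\langle Nx,x\rangle|\le\langle |N|x,x\rangle$ (Lemma \ref{Kittaneh Lemma abos valu hypo}). The only difference is that you apply everything to $N=AK$ itself, invoking normality twice (once to identify $|AK|^2=(AK)^*(AK)$ with $AKK^*A$, and once for the numerical-range inequality), whereas the paper works with $(AK)^*$, for which $|(AK)^*|^2=AK(AK)^*=AKK^*A$ holds with no hypothesis and only hyponormality of $(AK)^*$ is needed --- which is exactly the adjustment that lets the paper strengthen the theorem from normal to co-hyponormal $AK$.
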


Can we go up to hyponormal $AK$? In fact, the result is not true
even when $AK$ is quasinormal (and so we cannot go up to subnormal
either). The counterexample is given next:

\begin{exa}
Let $S$ be the shift operator on $\ell^2$. Setting $A=SS^*$, we see
that $A\geq0$. Now, take $K=S$ (and so $\|K\|=1$). It is clear that
$AK=SS^*S=S$ is quasinormal. If Reid Inequality held, then we would
have
\[|<Sx,x>|\leq <SS^*x,x>=\|S^*x\|^2\]
for each $x\in\ell^2$. This inequality clearly fails to hold for all
$x$. Indeed, taking $x=(2,1,0,0,\cdots)$, we see that
\[|<Sx,x>|=2\leq \|S^*x\|^2=1\]
which is absurd.
\end{exa}

The good news is that Reid Inequality can yet be improved as it
holds if $AK$ is co-hyponormal, that is, if $(AK)^*$ is hyponormal.
The proof, however, relies on the following result:

\begin{lem}\label{Kittaneh Lemma abos valu hypo}(\cite{Kitanneh-REid-et
al}, cf. Theorem \ref{hklmmm}) Let $A\in B(H)$ be hyponormal. Then
\[|<Ax,x>|\leq <|A|x,x>.\]
\end{lem}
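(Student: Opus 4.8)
The plan is to combine the polar decomposition of $A$ with a single application of the Cauchy--Schwarz inequality, and then to absorb the resulting ``error term'' using hyponormality together with fact (4) above (monotonicity of the square root). Write $A = U|A|$ for the polar decomposition, where $U$ is the partial isometry whose initial space is $\overline{\ran |A|}$. Since $|A| = |A|^{1/2}|A|^{1/2}$ and $|A|^{1/2}$ is self-adjoint, for any $x\in H$ one may rewrite
\[
<Ax,x> = <U|A|^{1/2}|A|^{1/2}x,x> = <|A|^{1/2}x,|A|^{1/2}U^*x>.
\]
Cauchy--Schwarz then gives $|<Ax,x>| \le \||A|^{1/2}x\|\cdot\||A|^{1/2}U^*x\|$, that is,
\[
|<Ax,x>|\le <|A|x,x>^{1/2}\,<U|A|U^*x,x>^{1/2}.
\]

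The first factor is already $<|A|x,x>^{1/2}$, so the proof reduces to showing that the second factor is dominated by the first, i.e. $U|A|U^*\le |A|$. I would establish this in two steps. First, a standard property of the polar decomposition is the identity $U|A|U^* = |A^*|$; one checks it by squaring, $(U|A|U^*)^2 = U|A|^2U^* = AA^* = |A^*|^2$, where the middle equality uses that $U^*U$ acts as the identity on $\overline{\ran|A|}\supseteq \ran|A|$, and then invokes uniqueness of the positive square root. Second, hyponormality says exactly $AA^*\le A^*A$, i.e. $|A^*|^2\le |A|^2$; applying fact (4) (the operator-monotone square root) yields $|A^*|\le |A|$, hence $<U|A|U^*x,x> = <|A^*|x,x>\le <|A|x,x>$.

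Substituting this back into the Cauchy--Schwarz bound gives $|<Ax,x>|\le <|A|x,x>^{1/2}<|A|x,x>^{1/2} = <|A|x,x>$, as desired. The argument is homogeneous of degree two in $x$, so no normalization of $x$ is needed.

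I expect the only delicate point to be the identity $U|A|U^* = |A^*|$: this is where the partial-isometry bookkeeping (that $U^*U$ fixes $\ran|A|$) actually enters, and it is precisely what lets hyponormality be fed in through the clean inequality $|A^*|\le |A|$. Everything else is a formal Cauchy--Schwarz manipulation. An alternative, should one prefer to avoid the polar decomposition, would be to use Lemma \ref{Lemma MAIN LEMMA}: from $\|A^*x\|\le\|Ax\|=\||A|x\|$ one obtains a contraction $K$ with $A^*=K|A|$, but then the analogue of the error term, $<K|A|K^*x,x>$, is not readily bounded by $<|A|x,x>$ (a contraction $K$ alone does not give $K|A|K^*\le|A|$), so the polar-decomposition route seems preferable.
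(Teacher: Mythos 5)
Your proof is correct, and it is essentially the argument the paper itself uses: the paper states this lemma without proof (citing Kittaneh), but its proof of the unbounded analogue, Theorem \ref{hklmmm}, is exactly your argument --- polar decomposition $A=U|A|$, the generalized Cauchy--Schwarz step $|<|A|x,U^*x>|^2\leq <|A|x,x><|A|U^*x,U^*x>$ (which is your $|A|^{1/2}$ factoring), the identity $U|A|U^*=|A^*|$, and Heinz to pass from $AA^*\leq A^*A$ to $|A^*|\leq |A|$. Your observation that the Lemma \ref{Lemma MAIN LEMMA} route stalls at bounding $<K|A|K^*x,x>$ is also a fair assessment of why the polar decomposition is the right tool here.
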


This and some interesting consequences may be found in Section
\ref{fg}.

In Section \ref{klj} we treat Reid (and Halmos-Reid) Inequality for
unbounded operators. Fortunately, the latter inequality does hold
for co-hyponormal unbounded operators as well. The proof is a little
more technical and so it seems appropriate to recall a couple of
definitions here (other notions on unbounded operators are assumed,
cf. \cite{SCHMUDG-book-2012}):

\begin{defn}\label{defn pos unbd schmud}
Let $T$ and $S$ be unbounded positive self-adjoint operators. We say
that $S\geq T$ if $D(S^{\frac{1}{2}})\subseteq D(T^{\frac{1}{2}})$ and $%
\left\Vert S^{\frac{1}{2}}x\right\Vert \geq \left\Vert T^{\frac{1}{2}%
}x\right\Vert$ for all $x\in D(S^{\frac{1}{2}}).$
\end{defn}

\begin{rema}
Heinz Inequality is valid for positive unbounded operators as well.
See e.g. \cite{SCHMUDG-book-2012}.
\end{rema}

Now, we recall the definition of an unbounded hyponormal operator.

\begin{defn}
A densely defined operator $T$ with domain $D(T)$ is called
hyponormal if
\[D(T)\subset D(T^*)\text{ and } \|T^*x\|\leq\|Tx\|,~\forall x\in D(T).\]
\end{defn}

\section{Main Results: The Bounded Case}\label{fg}

\begin{thm}\label{Main THM}
Let $A,K\in B(H)$ be such that $A$ is positive and $(AK)^*$ is
hyponormal. Then
\[|<AKx,x>|\leq ||K||<Ax,x>\]
for all $x\in H$.
\end{thm}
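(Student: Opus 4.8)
The plan is to exploit the hyponormality hypothesis through Lemma \ref{Kittaneh Lemma abos valu hypo}, applied not to $AK$ but to its adjoint $(AK)^*$, which is the operator actually assumed hyponormal. Since $<(AK)^*x,x>=\overline{<AKx,x>}$, these two quantities share the same modulus, so the lemma immediately yields
\[|<AKx,x>|=|<(AK)^*x,x>|\leq <|(AK)^*|x,x>\]
for every $x\in H$. This reduces the whole problem to comparing the positive operator $|(AK)^*|$ with $\|K\|A$.

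Next I would establish the operator inequality $|(AK)^*|\leq \|K\|A$, from which the theorem follows by evaluating the quadratic form at $x$. To prove it, note that $A$ is self-adjoint, whence $|(AK)^*|^2=(AK)(AK)^*=AKK^*A$. Since $\|KK^*\|=\|K\|^2$, we have $KK^*\leq \|K\|^2I$, and conjugating this inequality by the positive operator $A$ preserves the order, giving $AKK^*A\leq \|K\|^2A^2$. As the positive square root is operator monotone (Heinz Inequality), it follows that
\[|(AK)^*|=\sqrt{AKK^*A}\leq \sqrt{\|K\|^2A^2}=\|K\|\,A,\]
where $\sqrt{A^2}=A$ because $A\geq0$. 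Chaining this with the first display finishes the argument.

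The conceptual core --- and the step carrying the real content --- is the operator inequality $|(AK)^*|\leq \|K\|A$, not any delicate numerical estimate: once Kittaneh's lemma is invoked with the correct operator, everything collapses to the monotonicity of $t\mapsto\sqrt t$ applied to $AKK^*A\leq\|K\|^2A^2$. The only genuine subtlety I anticipate is remembering to apply the lemma to $(AK)^*$ (the hyponormal operator, not $AK$ itself) and to use $A^*=A$ when computing $|(AK)^*|^2$; were one to reverse these roles, the argument would instead demand that $AK$ be hyponormal, which the quasinormal counterexample above shows may fail.
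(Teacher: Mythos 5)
Your argument is correct and coincides with the paper's own proof: both reduce the statement to the operator inequality $|(AK)^*|\leq \|K\|A$, obtained by conjugating $KK^*\leq\|K\|^2I$ by the self-adjoint $A$ and invoking the operator monotonicity of the square root, and then apply Lemma \ref{Kittaneh Lemma abos valu hypo} to the hyponormal operator $(AK)^*$. The only difference is cosmetic (order of the two steps), so there is nothing further to add.
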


\begin{proof}
  The inequality is evident when $K=0$. So, assume that
$K\neq0$. It is then clear that $\frac{K}{\|K\|}$ satisfies
\[KK^*\leq \|K\|^2I.\]
Hence
\[|(AK)^*|^2=AKK^*A\leq \|K\|^2A^2\]
or simply $|(AK)^*|\leq \|K\|A$ after passing to square roots.

Now, for all $x\in H$
\[|<AKx,x>|=|<x,(AK)^*x>|=|\overline{<(AK)^*x,x>}|=|<(AK)^*x,x>|.\]

Since $(AK)^*$ is hyponormal, Lemma \ref{Kittaneh Lemma abos valu
hypo} combined with $|(AK)^*|\leq \|K\|A$ give
\[|<AKx,x>|=|<(AK)^*x,x>|\leq |<|(AK)^*|x,x>|\leq \|K\|<Ax,x>\]
and this marks the end of the proof.
\end{proof}

\begin{thm}\label{theorem squrae root first}
Theorem \ref{Main THM} can be deduced from the operator monotony of
the positive square root on the set of positive operators, and vice
versa.
\end{thm}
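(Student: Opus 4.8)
The plan is to prove the stated equivalence by splitting it into the two implications. Write (M) for the operator monotonicity of the positive square root, i.e. $0\le B\le C$ implies $\sqrt B\le\sqrt C$, and (R) for Theorem \ref{Main THM}. I would treat (M) $\Rightarrow$ (R) first, then the converse (R) $\Rightarrow$ (M).

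The implication (M) $\Rightarrow$ (R) requires essentially no new work, since it is exactly what the displayed proof of Theorem \ref{Main THM} carries out. The only step there that is neither pure algebra nor an application of Lemma \ref{Kittaneh Lemma abos valu hypo} is the passage from $|(AK)^*|^2=AKK^*A\le\|K\|^2A^2$ to $|(AK)^*|\le\|K\|A$. I would make explicit that this is precisely an instance of (M): the left-hand side is $\sqrt{AKK^*A}=|(AK)^*|$ and the right-hand side is $\sqrt{\|K\|^2A^2}=\|K\|A$ (because $A\ge0$), so applying (M) to the two positive operators $AKK^*A\le\|K\|^2A^2$ yields the inequality of absolute values. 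Everything after that, namely rewriting $<AKx,x>$ as $\overline{<(AK)^*x,x>}$ and invoking the hyponormality of $(AK)^*$ through Lemma \ref{Kittaneh Lemma abos valu hypo}, is independent of (M). Hence (R) follows from (M).

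For the converse (R) $\Rightarrow$ (M), I would start from $0\le B\le C$ and aim at $\sqrt B\le\sqrt C$. Since $\|\sqrt B x\|^2=<Bx,x>\le<Cx,x>=\|\sqrt C x\|^2$ for every $x$, Lemma \ref{Lemma MAIN LEMMA} furnishes a contraction $L$ with $\sqrt B=L\sqrt C$. Taking adjoints and using that $\sqrt B$ and $\sqrt C$ are self-adjoint gives $\sqrt C L^*=\sqrt B$, so the product $\sqrt C L^*$ is self-adjoint. I would then apply (R) with the positive operator $\sqrt C$ playing the role of $A$ and $L^*$ playing the role of $K$: the product $\sqrt C L^*=\sqrt B$ is self-adjoint, so $(\sqrt C L^*)^*=\sqrt B$ is normal and a fortiori hyponormal, and the hypothesis of Theorem \ref{Main THM} is met. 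Its conclusion reads $|<\sqrt B x,x>|\le\|L^*\|<\sqrt C x,x>\le<\sqrt C x,x>$, and since $<\sqrt B x,x>\ge0$ the absolute value may be dropped, giving $\sqrt B\le\sqrt C$.

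The step I expect to carry the weight is this converse direction, and within it the recognition that a self-adjoint product is the relevant special case of a co-hyponormal one: once the contraction factorization $\sqrt B=L\sqrt C$ of Lemma \ref{Lemma MAIN LEMMA} is in hand, the product $\sqrt C L^*$ is self-adjoint, so $(\sqrt C L^*)^*$ is automatically hyponormal and Theorem \ref{Main THM} applies verbatim with $\|L^*\|\le1$. A pleasant feature I would emphasize is that no invertibility or $\varepsilon$-approximation argument is needed, the factorization lemma doing all the work.
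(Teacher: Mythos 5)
Your proposal is correct and follows essentially the same route as the paper: the forward direction is read off from the square-root step in the proof of Theorem \ref{Main THM}, and the converse uses Lemma \ref{Lemma MAIN LEMMA} to write $\sqrt B=L\sqrt C$, observes that the self-adjointness of $\sqrt C L^*=\sqrt B$ makes $(\sqrt C L^*)^*$ hyponormal, and applies Theorem \ref{Main THM} with the contraction $L^*$. Only the letter names differ from the paper's argument.
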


\begin{proof}
Let $A,B\in B(H)$.
\begin{enumerate}
  \item Assume that $0\leq A\leq B$. Let $x\in H$. Since $A\leq B$, we easily see that:
\[\|\sqrt A x\|^2\leq \|\sqrt B x\|^2.\]

So, by Lemma \ref{Lemma MAIN LEMMA}, we know that $\sqrt A=K\sqrt B$
for some contraction $K\in B(H)$. Since $\sqrt A$ is self-adjoint,
it follows that $K\sqrt B$ too is self-adjoint (hence
co-hyponormal!). As $\sqrt B\geq0$, then by Reid Inequality (Theorem
\ref{Main THM}) we obtain:
\[<\sqrt Ax,x>=<\sqrt BK^*x,x>\leq <\sqrt Bx,x>\]
or
\[\sqrt A\leq \sqrt B,\]
as required.
  \item The other implication, which uses the fact that the square
  root is increasing, has already been presented in the proof of
  Theorem \ref{Main THM}.
\end{enumerate}
\end{proof}

The idea of the first part of the proof of the preceding result may
be used to produce new proofs of other known results on monotony.

\begin{thm}\label{theorem 2}
Let $A,B\in B(H)$. If $0\leq A\leq B$ and if $A$ is invertible, then
$B$ is invertible and $B^{-1}\leq A^{-1}$.
\end{thm}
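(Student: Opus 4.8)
The plan is to recycle the factorization idea from Theorem \ref{theorem squrae root first}, but now to exploit the invertibility of $A$ in order to \emph{reverse} the inequality. Before that I would settle the invertibility of $B$. Because $A$ is positive and invertible, one has $A\geq mI$ with $m:=\|A^{-1}\|^{-1}>0$; combined with $A\leq B$ this yields $B\geq mI$, so $B$ is positive and bounded below, hence invertible.

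Next I would carry out the square-root factorization exactly as in Theorem \ref{theorem squrae root first}. The hypothesis $0\leq A\leq B$ gives $\|\sqrt A x\|\leq\|\sqrt Bx\|$ for all $x\in H$, so Lemma \ref{Lemma MAIN LEMMA} furnishes a contraction $K\in B(H)$ with $\sqrt A=K\sqrt B$. The new feature compared with Theorem \ref{theorem squrae root first} is that all of $A,B,\sqrt A,\sqrt B$ are now invertible. Multiplying on the right by $(\sqrt B)^{-1}=\sqrt{B^{-1}}$ pins down the contraction as $K=\sqrt A\,\sqrt{B^{-1}}$, and taking adjoints (both square roots are self-adjoint) shows that $K^*=\sqrt{B^{-1}}\,\sqrt A$ is again a contraction.

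To extract the reversed inequality I would feed well-chosen vectors into $K^*$. For $x\in H$, set $y=\sqrt{A^{-1}}x=(\sqrt A)^{-1}x$; then $K^*y=\sqrt{B^{-1}}\,\sqrt A\,(\sqrt A)^{-1}x=\sqrt{B^{-1}}x$, so the contractivity $\|K^*y\|\leq\|y\|$ becomes
\[\|\sqrt{B^{-1}}x\|\leq\|\sqrt{A^{-1}}x\|.\]
Squaring and rewriting as inner products gives $<B^{-1}x,x>\leq<A^{-1}x,x>$ for every $x$, that is $B^{-1}\leq A^{-1}$.

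I expect the decisive step to be the passage from $A\leq B$ to $B^{-1}\leq A^{-1}$: writing $\sqrt A=K\sqrt B$ is routine, but the real point is to notice that the very same contraction, viewed through $K^*=\sqrt{B^{-1}}\,\sqrt A$, already encodes the reversed norm comparison once it is evaluated on the vectors $(\sqrt A)^{-1}x$. The auxiliary identities $(\sqrt A)^{-1}=\sqrt{A^{-1}}$ and $(\sqrt B)^{-1}=\sqrt{B^{-1}}$ are standard for positive invertible operators and may be invoked without fuss. It is worth observing that, in contrast with Theorem \ref{theorem squrae root first}, Reid Inequality itself is not actually needed here: since the target inequality lives at the level of the norms $\|\sqrt{B^{-1}}x\|$ and $\|\sqrt{A^{-1}}x\|$, the factorization Lemma \ref{Lemma MAIN LEMMA} alone does the job.
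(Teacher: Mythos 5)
Your proof is correct and follows essentially the same route as the paper: both obtain a contraction $K$ with $\sqrt A=K\sqrt B$ from Lemma \ref{Lemma MAIN LEMMA} and then read off $<B^{-1}x,x>\leq <A^{-1}x,x>$ from the contractivity of $K^*=(\sqrt B)^{-1}\sqrt A$ evaluated at $(\sqrt A)^{-1}x$. The only divergence is in establishing the invertibility of $B$: you use the elementary lower bound $B\geq A\geq \|A^{-1}\|^{-1}I$, whereas the paper deduces from $I=(\sqrt A)^{-1}K\sqrt B$ that the self-adjoint operator $\sqrt B$ is left invertible and hence invertible (citing \cite{Dehimi-Mortad-II}); your version is self-contained and equally valid.
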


\begin{proof}
As in the proof of Theorem \ref{theorem squrae root first}, we know
that $\sqrt A=K\sqrt B$ for some contraction $K\in B(H)$. Since
$\sqrt A$ is invertible (as $A$ is), it follows that $I=(\sqrt
A)^{-1}K\sqrt B$, i.e. the self-adjoint $\sqrt B$ is left invertible
and
  so $\sqrt B$ or simply $B$ is invertible (cf. \cite{Dehimi-Mortad-II}) and
\[(\sqrt B)^{-1}=(\sqrt A)^{-1}K=K^*(\sqrt A)^{-1}\]
by the self-adjointness of both $(\sqrt B)^{-1}$ and $(\sqrt
A)^{-1}$.

Let $x\in H$. Then (since $K^*$ too is a contraction)
\[<B^{-1}x,x>=\|(\sqrt B)^{-1}x\|^2=\|K^*(\sqrt A)^{-1}x\|^2\leq \|(\sqrt A)^{-1}x\|^2=<A^{-1}x,x>,\]
as needed.
\end{proof}

The following improvement of Lemma \ref{Lemma MAIN LEMMA} makes
proofs in case of commutativity very simple.

\begin{lem}\label{Lemma MAIN Stochel improvement LEMMA}Let $H$ be a complex Hilbert
space. If $A,B\in B(H)$ are self-adjoint and $BA\geq 0$, then
\[\forall x\in H:~\|Ax\|\leq \|Bx\|\Longleftrightarrow \exists K\in B(H) \text{ \textbf{positive} contraction}:~A=KB.\]
\end{lem}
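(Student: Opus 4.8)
The plan is to prove the two implications separately; the reverse one is immediate, and the forward one is where the genuine improvement over Lemma \ref{Lemma MAIN LEMMA} lies. For the reverse direction, if $A=KB$ with $K$ a positive contraction, then for every $x\in H$
\[\|Ax\|=\|KBx\|\leq\|K\|\,\|Bx\|\leq\|Bx\|,\]
so here only $\|K\|\leq1$ is used and neither the positivity of $K$ nor the hypothesis $BA\geq0$ is needed.

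For the forward direction I would not merely quote Lemma \ref{Lemma MAIN LEMMA}, since it produces only \emph{some} contraction. Instead I would isolate the canonical contraction built in its proof and show that the extra hypotheses force it to be positive. Concretely, define $K$ on $\ran B$ by $K(Bx)=Ax$; the assumption $\|Ax\|\leq\|Bx\|$ makes this well defined and contractive there, so it extends by continuity to a contraction on $\overline{\ran B}$, and one puts $K=0$ on $(\ran B)^{\perp}=\ker B$ (using $B=B^*$). Two observations then drive the argument. First, $\|Ax\|\leq\|Bx\|$ gives $\ker B\subseteq\ker A$, so for any $z\in\ker B$ and any $x\in H$ we get $<Ax,z>=<x,Az>=0$, whence $Ax\in(\ker B)^{\perp}=\overline{\ran B}$; thus $K$ maps $\overline{\ran B}$ into itself. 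Second, using $B=B^*$ and $BA\geq0$,
\[<K(Bx),Bx>=<Ax,Bx>=<BAx,x>\geq0,\]
so the quadratic form of $K$ is nonnegative on $\ran B$, hence on all of $\overline{\ran B}$ by continuity.

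To finish I would verify that $<Ky,y>\geq0$ for \emph{every} $y\in H$. Writing $y=y_1+y_2$ with $y_1\in\overline{\ran B}$ and $y_2\in\ker B$, the first observation gives $Ky_1\in\overline{\ran B}$ and $Ky_2=0$, so the cross term $<Ky_1,y_2>$ vanishes and $<Ky,y>=<Ky_1,y_1>\geq0$. In a complex Hilbert space this single inequality yields at once that $K$ is self-adjoint and positive; since $K$ is already a contraction, it is the desired positive contraction with $A=KB$.

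The main obstacle is precisely this last step: contractivity of $K$ is automatic, but positivity is not, and the real danger is the cross terms between $\overline{\ran B}$ and $\ker B$. The two hypotheses are used exactly to kill them — $\|Ax\|\leq\|Bx\|$ forces $\ker B\subseteq\ker A$, so $K$ preserves $\overline{\ran B}$ and the off-diagonal blocks disappear, while $BA\geq0$ supplies the nonnegativity of the form on $\ran B$. It is worth noting that $BA\geq0$ together with the self-adjointness of $A$ and $B$ also forces $AB=BA$; this is consistent with the symmetry of the form above, though the argument does not need to invoke commutativity directly.
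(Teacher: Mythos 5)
Your proof is correct, and it differs from the paper's in a meaningful way. Both arguments are built around the same canonical operator --- the densely defined $K_0(Bx)=Ax$, contractive because $\|Ax\|\leq\|Bx\|$, and with nonnegative quadratic form on $\ran B$ because $<K_0(Bx),Bx>=<BAx,x>\geq0$ --- but you part ways on how to pass from $\overline{\ran B}$ to all of $H$. The paper first deduces $AB=BA$ from the self-adjointness of $BA$, uses this commutativity to make $\ker A$ reduce both operators, and thereby reduces to the case where $A$ (hence $B$) is injective, so that $B^{-1}$ is densely defined and $K_0=AB^{-1}$ lives on a dense subspace of $H$ itself. You instead keep the general case, extend $K_0$ by $0$ on $\ker B=(\ran B)^{\perp}$, and kill the cross terms directly: the observation that $\ker B\subseteq\ker A$ forces $\ran A\subseteq(\ker B)^{\perp}=\overline{\ran B}$, so $K$ preserves the decomposition $H=\overline{\ran B}\oplus\ker B$, the off-diagonal contributions to $<Ky,y>$ vanish, and $<Ky,y>=<Ky_1,y_1>\geq0$; self-adjointness and positivity then follow from polarization in a complex space. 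Your route buys independence from commutativity (you correctly note that $AB=BA$ is a consequence of the hypotheses rather than an ingredient of the argument) and avoids both the reducing-subspace reduction and the densely defined inverse $B^{-1}$; the paper's route is shorter on the page but leans on those two steps. One further point in your favour: your ``$\Leftarrow$'' direction actually proves the stated implication $\|Ax\|\leq\|Bx\|$ from $A=KB$ with $\|K\|\leq1$, whereas the paper's ``$\Leftarrow$'' paragraph only re-derives $BA\geq0$ from the positivity of $K$.
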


\begin{proof}\hfill
\begin{enumerate}
  \item "$\Leftarrow$":  Let $x\in H$. Then
  \[0 \leq <KBx,Bx> = <Ax,Bx> = <BAx,x>,\]
  that is, $BA\geq 0$.
  \item "$\Rightarrow$": Since $BA \geq 0$, it follows that $BA$ is self-adjoint, i.e. $AB=BA$. As a
consequence, $\ker A$ reduces $A$ and $B$, and the restriction of
$A$ to $\ker A$ is the zero operator on $\ker A$. Hence, we can
assume that $A$ is injective. Therefore, because $\ker B \subset
\ker A =\{0\}$, we see that $B^{-1}$ is self-adjoint and densely
defined. Set $K_0=AB^{-1}$. Then $K_0$ is densely defined and

\[||K_0(Bx)|| = ||AB^{-1}Bx|| = ||Ax|| \leq||Bx||,  \forall x \in H,\]

\end{enumerate}
signifying that $K_0$ is a contraction with a unique contractive
extension $K$ to the whole $H$. Since
\[<K_0(Bx), Bx> = <Ax,Bx>=<BAx,x> \geq 0\]
for all $x \in H$, we see that $K$ is positive as well. Clearly
\[KBx= K_0(Bx) = Ax\]
for all $x\in H$, and this completes the proof.
\end{proof}

\begin{rema}
By scrutinizing the previous proof, we see that we can replace each
word "positive" by "self-adjoint" in the statement and in the proof.
Hence we also have:
\end{rema}

\begin{pro}
Let $A,B\in B(H)$ be self-adjoint such that $AB$ is self-adjoint
(that is, iff $AB=BA$). Then
\[\forall x\in H:~\|Ax\|\leq \|Bx\|\Longleftrightarrow \exists K\in B(H) \text{ \textbf{self-adjoint} contraction}:~A=KB.\]
\end{pro}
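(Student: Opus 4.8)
The plan is to treat this proposition as the self-adjoint counterpart of Lemma \ref{Lemma MAIN Stochel improvement LEMMA}, running the same two-step scheme but carrying the adjective "self-adjoint" in place of "positive" throughout, exactly as the preceding remark suggests. First I would record the elementary translation of the hypothesis: since $A$ and $B$ are self-adjoint, $(AB)^*=B^*A^*=BA$, so "$AB$ self-adjoint" means precisely the commutation relation $AB=BA$, and it is this identity that will do the work at the decisive step.

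For the implication "$\Leftarrow$" there is essentially nothing to prove: if $A=KB$ with $K$ a contraction (self-adjointness is not even needed here), then for every $x\in H$ one has $\|Ax\|=\|KBx\|\leq\|K\|\,\|Bx\|\leq\|Bx\|$. For completeness I would also note, as in the preceding lemma, that the standing hypothesis is recovered: self-adjointness of $A,B,K$ gives $A=A^*=(KB)^*=BK$, whence $KB=BK$ and therefore $AB=KB^2=B^2K=BA$.

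The substantive direction is "$\Rightarrow$", where I would reproduce the construction of the preceding lemma. Using $AB=BA$, the subspace $\ker A$ reduces both $A$ and $B$ and $A$ vanishes on it, so after splitting off $\ker A$ (where I simply put $K=0$) I may assume $A$ injective. The norm inequality then yields $\ker B\subseteq\ker A=\{0\}$, so the self-adjoint $B$ is injective with dense range and $B^{-1}$ is a densely defined self-adjoint operator. Setting $K_0=AB^{-1}$ on $\ran B$, the estimate $\|K_0(Bx)\|=\|Ax\|\leq\|Bx\|$ shows $K_0$ is a contraction on the dense subspace $\ran B$, hence extends uniquely to a contraction $K\in B(H)$ with $A=KB$.

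The one genuinely new point—the place where this statement exceeds the generic factorization of Lemma \ref{Lemma MAIN LEMMA}—is that $K$ can be taken self-adjoint, and this is the only step I expect to require real care. The plan is to check symmetry of $K_0$ on the dense range of $B$ via
\[<K_0(Bx),By>=<Ax,By>=<BAx,y>=<ABx,y>=<Bx,Ay>=<Bx,K_0(By)>,\]
the middle equality being exactly the commutation $AB=BA$ furnished by the hypothesis. Since $K$ is bounded and everywhere defined, symmetry on a dense set upgrades to self-adjointness, which closes the argument. This computation is the precise analogue of the positivity estimate $<K_0(Bx),Bx>=<BAx,x>\geq 0$ used in Lemma \ref{Lemma MAIN Stochel improvement LEMMA}, with the commutation relation now playing the role there played by the positivity of $BA$.
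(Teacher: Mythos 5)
Your proposal is correct and follows exactly the route the paper intends: the paper proves this Proposition only implicitly, via the remark that one may repeat the proof of Lemma \ref{Lemma MAIN Stochel improvement LEMMA} with ``positive'' replaced by ``self-adjoint'', and your argument is precisely that repetition, with the symmetry computation $<K_0(Bx),By>=<Bx,K_0(By)>$ playing the role of the positivity estimate $<K_0(Bx),Bx>\geq 0$. No gaps; the reduction to injective $A$, the construction of $K$ as the contractive extension of $AB^{-1}$, and the density argument upgrading symmetry to self-adjointness all match the paper's scheme.
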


The next result is known. Its proof is a simple application of Lemma
\ref{Lemma MAIN Stochel improvement LEMMA}.

\begin{cor}\label{ppp} Let $A,B\in B(H)$ be positive and commuting. Then
\[0\leq A\leq B\Longrightarrow A^2\leq B^2.\]
\end{cor}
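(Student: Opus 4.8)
The plan is to reduce the target $A^2\leq B^2$ to a plain norm inequality and to obtain the latter by factoring $A$ through $B$ with the help of Lemma \ref{Lemma MAIN Stochel improvement LEMMA}. Since $A$ and $B$ are self-adjoint, $A^2\leq B^2$ is equivalent to $\|Ax\|\leq\|Bx\|$ for every $x\in H$, because $<A^2x,x>=\|Ax\|^2$ and likewise $<B^2x,x>=\|Bx\|^2$. So the whole problem is to establish $\|Ax\|\leq\|Bx\|$, and for that it will be enough to exhibit a contraction $K$ with $A=K^2B$.

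First I would pass to square roots. From $0\leq A\leq B$ we read off $<Ax,x>\leq<Bx,x>$, i.e. $\|\sqrt A x\|\leq\|\sqrt B x\|$ for all $x$. Next, because $A$ and $B$ commute, their positive square roots commute as well (each is a norm limit of polynomials in $A$, resp. $B$), so in particular $\sqrt B\sqrt A=\sqrt A\sqrt B$; being the product of two commuting positive operators, this is itself positive, i.e. $\sqrt B\sqrt A\geq0$. Hence the pair $(\sqrt A,\sqrt B)$ meets the hypotheses of Lemma \ref{Lemma MAIN Stochel improvement LEMMA}, which furnishes a positive contraction $K\in B(H)$ with $\sqrt A=K\sqrt B$.

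The key computation then converts this into a factorization of $A$ itself, using only $\sqrt A=K\sqrt B$ together with the commutativity of the square roots: $A=\sqrt A\,\sqrt A=K\sqrt B\,\sqrt A=K\sqrt A\,\sqrt B=K(K\sqrt B)\sqrt B=K^2B$, where the middle equality is precisely $\sqrt B\sqrt A=\sqrt A\sqrt B$. Since $K^2$ is again a contraction, $\|K^2\|\leq1$, and therefore $\|Ax\|=\|K^2Bx\|\leq\|Bx\|$ for all $x$. Squaring and using self-adjointness gives $<A^2x,x>=\|Ax\|^2\leq\|Bx\|^2=<B^2x,x>$, that is, $A^2\leq B^2$.

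I expect the only genuinely delicate point to be the commutativity step: one must be certain that commuting positive $A,B$ have commuting square roots and that the product $\sqrt A\sqrt B$ is positive, since these are exactly what lets the lemma apply and what legitimizes the substitution yielding $A=K^2B$. Everything else is bookkeeping. It is worth noting that the positivity of $K$ is not actually used here—only $\|K\|\leq1$ enters—so the weaker Lemma \ref{Lemma MAIN LEMMA} would already close the argument; the improved lemma simply makes the commutative setting transparent.
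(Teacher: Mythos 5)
Your proof is correct and follows essentially the same route as the paper: reduce to $\|Ax\|\leq\|Bx\|$, apply Lemma \ref{Lemma MAIN Stochel improvement LEMMA} to the square roots to get $\sqrt A=K\sqrt B$, and use commutativity to obtain $A=K^2B$. The only (harmless) difference is that you justify the middle step via $\sqrt A\sqrt B=\sqrt B\sqrt A$ rather than the paper's assertion $K\sqrt B=\sqrt B K$, and your closing remark that only $\|K\|\leq1$ is needed matches the paper's own remark that the argument deliberately avoids invoking Reid's inequality directly.
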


\begin{proof}Since $AB\geq 0$, we know by Lemma \ref{Lemma MAIN Stochel improvement
LEMMA} that $\sqrt A=K\sqrt B$ for some positive contraction $K\in
B(H)$ and $K\sqrt B=\sqrt B K$. Hence
\[A=K\sqrt B K\sqrt B=K^2B.\]
So for all $x\in H$:
\[\|Ax\|^2=\|K^2Bx\|^2\leq \|Bx\|^2\]
\text{ or merely }
\[<A^2x,x>=<Ax,Ax>=\|Ax\|^2\leq \|Bx\|^2=<B^2x,x>,\]
as required.
\end{proof}

\begin{rema}
The previous proof could even be shortened by directly using Reid
Inequality. As is presented, it can be given in courses which do not
cover Reid Inequality.
\end{rema}

By invoking the functional calculus of positive operators, we know
that we can define $A^{\alpha}$ for any (real) $\alpha>0$ (we may
allow $\alpha=0$) whenever $A\geq0$. We also know that if $B$ is
also positive and it commutes with $A$, then (using the spectral
theorem or else)
\[(AB)^{\alpha}=A^{\alpha}B^{\alpha}.\]

As a generalization of Corollary \ref{ppp}, we have

\begin{pro}\label{007} Let $A,B\in B(H)$ be positive and commuting. Then
\[0\leq A\leq B\Longrightarrow A^{\alpha}\leq B^{\alpha}\]
for any $\alpha\in (0,\infty)$.
\end{pro}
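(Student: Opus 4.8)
The plan is to generalise the technique behind Corollary~\ref{ppp}: in the commuting positive setting we already know that squaring preserves the order (so \emph{doubling} the exponent is order preserving), while the Heinz Inequality tells us that raising to a power in $(0,1]$ is order preserving. Combining these two operations, halving via Heinz and doubling via Corollary~\ref{ppp}, we can reach an arbitrary exponent $\alpha\in(0,\infty)$.

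First I would dispose of the easy range $\alpha\in(0,1]$: here the Heinz Inequality gives directly $0\le A\le B\Rightarrow A^{\alpha}\le B^{\alpha}$, and no commutativity is needed. For $\alpha>1$, I would pick the unique integer $n\ge 1$ with $2^{n-1}<\alpha\le 2^{n}$, so that $\beta:=\alpha/2^{n}\in(0,1]$. Applying the Heinz Inequality to $0\le A\le B$ then yields $A^{\beta}\le B^{\beta}$, which is the base of the bootstrap.

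Now I bootstrap upwards. Since $A$ and $B$ commute, so do all of their powers $A^{\gamma}$ and $B^{\gamma}$ obtained from the functional calculus, and these are again positive. Hence Corollary~\ref{ppp} applies to the pair $A^{\beta}\le B^{\beta}$ and gives $A^{2\beta}\le B^{2\beta}$. Repeating this squaring step $n$ times, each time the two operators involved being still commuting positive operators so that Corollary~\ref{ppp} remains applicable, produces
\[ A^{2^{n}\beta}\le B^{2^{n}\beta}, \quad\text{i.e.}\quad A^{\alpha}\le B^{\alpha}, \]
which is the desired conclusion.

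The step that deserves the most care, and where the hypothesis is genuinely used, is this repeated application of Corollary~\ref{ppp}: one must verify at each stage that $A^{\gamma}$ and $B^{\gamma}$ are commuting positive operators so that the squaring lemma is legitimate. This is precisely where commutativity is indispensable, since for non-commuting operators $t\mapsto t^{\alpha}$ fails to be operator monotone as soon as $\alpha>1$; the commutation $A^{\gamma}B^{\gamma}=B^{\gamma}A^{\gamma}$ follows from the joint functional calculus of the commuting positive operators $A$ and $B$. A conceptually shorter but less self-contained alternative would be to simultaneously represent $A$ and $B$ as multiplications by $0\le f\le g$ and invoke the scalar monotonicity of $t\mapsto t^{\alpha}$, but the bootstrap above has the merit of staying entirely within the machinery already developed in this section.
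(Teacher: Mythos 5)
Your proof is correct, but it follows a genuinely different route from the paper's. The paper factorizes: by Lemma \ref{Lemma MAIN Stochel improvement LEMMA} one writes $\sqrt A=K\sqrt B$ with $K$ a \emph{positive} contraction commuting with $\sqrt B$, deduces $A=K^2B=BK^2$, hence $A^{\alpha}=B^{\alpha}K^{2\alpha}$ by the functional calculus for commuting positive operators, and then concludes with a single application of Reid's Inequality (Theorem \ref{Main THM}) together with $\|K^{2\alpha}\|=\|K\|^{2\alpha}\leq1$ (Lemma \ref{45}). You instead write $\alpha=2^{n}\beta$ with $\beta\in(0,1]$, invoke the L\"owner--Heinz inequality to get $A^{\beta}\leq B^{\beta}$, and then bootstrap with $n$ applications of Corollary \ref{ppp}. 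Both arguments are valid; your verification that the intermediate powers $A^{\gamma},B^{\gamma}$ remain commuting and positive is exactly the point that needs checking, and it does hold via the functional calculus. The trade-off is that your base case requires the full L\"owner--Heinz theorem for an arbitrary exponent in $(0,1]$, whereas the paper only ever assumes the square-root case of Heinz; since the declared purpose of this section is to rederive monotony results from Reid's Inequality, importing the general L\"owner--Heinz statement as a black box is a heavier (and, in spirit, somewhat circular) external input. What your approach buys in exchange is that it dispenses entirely with the operator identity $(BK^2)^{\alpha}=B^{\alpha}K^{2\alpha}$ and with Lemma \ref{45}, relying only on the already proved squaring step.
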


To prove it, we need the following perhaps known result:

\begin{lem}\label{45}Let $A\in B(H)$ be such that $A\geq0$. Then
\[\|A^{\alpha}\|=\|A\|^{\alpha}\]
for any $\alpha\in (0,\infty)$.
\end{lem}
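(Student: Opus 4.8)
The plan is to reduce the identity to a statement about the spectrum of $A$ via the continuous functional calculus. Since $A\geq 0$ is in particular self-adjoint, its norm coincides with its spectral radius, so that
\[\|A\|=r(A)=\max\{\lambda:\lambda\in\sigma(A)\},\]
the maximum being attained because $\sigma(A)$ is a compact subset of $[0,\infty)$. The function $f(t)=t^{\alpha}$ is continuous on $[0,\infty)\supseteq\sigma(A)$ (including at $t=0$ since $\alpha>0$), hence $f(A)=A^{\alpha}$ is well defined and is again positive, in particular self-adjoint.

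Next I would invoke the fact that $g\mapsto g(A)$ is an isometric $*$-homomorphism from $C(\sigma(A))$ into $B(H)$; equivalently, one may combine the spectral mapping theorem with the equality $\|A^{\alpha}\|=r(A^{\alpha})$, which is valid because $A^{\alpha}$ is self-adjoint. Either route yields
\[\|A^{\alpha}\|=\sup_{\lambda\in\sigma(A)}\lambda^{\alpha}.\]
Because $t\mapsto t^{\alpha}$ is increasing on $[0,\infty)$ and $\sigma(A)\subseteq[0,\|A\|]$ with $\|A\|\in\sigma(A)$, the supremum is attained at $\lambda=\|A\|$, so that
\[\|A^{\alpha}\|=\left(\max_{\lambda\in\sigma(A)}\lambda\right)^{\alpha}=\|A\|^{\alpha},\]
which is the claim.

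I do not expect a genuine obstacle here; the only points deserving a little care are verifying that $t^{\alpha}$ is a legitimate element of $C(\sigma(A))$ and that the spectral radius of the positive operator $A$ is actually attained as $\max\sigma(A)$, both of which are immediate from $\sigma(A)\subseteq[0,\infty)$ and self-adjointness. Should one wish to avoid the functional calculus altogether, the C$^*$-identity gives $\|A^{2}\|=\|A\|^{2}$ for self-adjoint operators, whence $\|B^{k}\|=\|B\|^{k}$ for positive $B$ and $k\in\N$ by induction, and $\|A^{1/2}\|=\|A\|^{1/2}$ from $\|A^{1/2}\|^{2}=\|A^{1/2}A^{1/2}\|=\|A\|$; applying these to $B=A^{1/2^{n}}$ settles all dyadic exponents $\alpha=k/2^{n}$, and the general case follows by uniform approximation on the compact $\sigma(A)$. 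The spectral argument is, however, the most direct and is the one I would present.
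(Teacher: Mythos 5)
Your argument is correct and is precisely the one the paper sketches: it identifies the norm of the positive (hence self-adjoint) operator $A^{\alpha}$ with its spectral radius and then applies the spectral mapping theorem to reduce the claim to $\sup_{\lambda\in\sigma(A)}\lambda^{\alpha}=(\sup_{\lambda\in\sigma(A)}\lambda)^{\alpha}$. You have simply written out in full the details that the paper's one-line proof leaves to the reader.
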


\begin{proof}One of the ways of seeing this is to use the spectral radius theorem
and the fact that $A^{\alpha}$ is positive.
\end{proof}

Now, we give the proof of Proposition \ref{007}:

\begin{proof}Since $AB\geq 0$, we know by Lemma \ref{Lemma MAIN Stochel improvement
LEMMA} that $\sqrt A=K\sqrt B$ for some positive contraction $K\in
B(H)$ and $K\sqrt B=\sqrt B K$. Hence
\[A=K\sqrt B K\sqrt B=K^2B=BK^2.\]
Ergo
\[A^{\alpha}=(BK^2)^{\alpha}=B^{\alpha}K^{2\alpha}\text{ (because $K^2B=BK^2$)}.\]
Finally, for any $x\in H$, Reid Inequality (and a glance at Lemma
\ref{45}) allows us to write:
\[<A^{\alpha}x,x>=<B^{\alpha}K^{2\alpha}x,x>\leq \|K^{2\alpha}\|<B^{\alpha}x,x>=\|K\|^{2\alpha}<B^{\alpha}x,x>\leq <B^{\alpha}x,x>,\]
as desired.
\end{proof}

\section{Main Results: The Unbounded Case}\label{klj}

We start with the next practical result (probably known):

\begin{pro}\label{565}
Let $T$ be a closed hyponormal operator. Then
\[T^*T\geq TT^*.\]
\end{pro}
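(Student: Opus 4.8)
The plan is to unwind the claimed operator inequality, in the sense of Definition \ref{defn pos unbd schmud}, and match it term for term against the defining properties of hyponormality. The whole point is to identify the square-root data of $T^*T$ and $TT^*$ with the ``graph data'' of $T$ and $T^*$.

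First I would invoke von Neumann's theorem: as $T$ is closed and densely defined, both $T^*T$ and $TT^*$ are positive self-adjoint operators, so their positive square roots $(T^*T)^{1/2}$ and $(TT^*)^{1/2}$ are well defined and the comparison of Definition \ref{defn pos unbd schmud} makes sense. The key technical ingredient --- and what I expect to be the main obstacle --- is the pair of identities
\[D\big((T^*T)^{1/2}\big)=D(T)\text{ and }\|(T^*T)^{1/2}x\|=\|Tx\|\ \ \forall x\in D(T),\]
together with the symmetric statement $D\big((TT^*)^{1/2}\big)=D(T^*)$ and $\|(TT^*)^{1/2}x\|=\|T^*x\|$ for all $x\in D(T^*)$. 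The clean route to these is the spectral theorem applied to the self-adjoint operator $T^*T$ (equivalently, the polar decomposition with $|T|=(T^*T)^{1/2}$): one first verifies $<T^*Tx,x>=\|Tx\|^2$ on the core $D(T^*T)$, so that $\||T|x\|^2=<T^*Tx,x>=\|Tx\|^2$ there, and then extends this isometric identity from the core $D(T^*T)$ to the full form domain $D(|T|)=D(T)$ by a closedness/continuity argument.

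With these identities established the conclusion is immediate. Hyponormality supplies the inclusion $D(T)\subseteq D(T^*)$, which reads
\[D\big((T^*T)^{1/2}\big)=D(T)\subseteq D(T^*)=D\big((TT^*)^{1/2}\big),\]
precisely the domain inclusion demanded by Definition \ref{defn pos unbd schmud}. Finally, for every $x\in D\big((T^*T)^{1/2}\big)=D(T)$ we have $x\in D(T^*)$ and the hyponormality estimate gives
\[\|(TT^*)^{1/2}x\|=\|T^*x\|\leq\|Tx\|=\|(T^*T)^{1/2}x\|.\]
By Definition \ref{defn pos unbd schmud} this is exactly $T^*T\geq TT^*$, which finishes the argument.
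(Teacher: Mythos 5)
Your proof is correct and follows essentially the same route as the paper's: both identify $D(|T|)=D(T)$ with $\||T|x\|=\|Tx\|$ (and likewise for $T^*$), then read off the inequality $T^*T\geq TT^*$ directly from the hyponormality conditions via Definition \ref{defn pos unbd schmud}. The only difference is that you sketch a justification of the identities $D((T^*T)^{1/2})=D(T)$ and $\|(T^*T)^{1/2}x\|=\|Tx\|$, which the paper simply quotes as known.
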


\begin{proof}First, since $T$ is closed, both $T^*T$ and $TT^*$ are
self-adjoint and positive (cf. \cite{SCHMUDG-book-2012}). As in the
bounded case, write $|T|=(T^*T)^{\frac{1}{2}}$. Then it is known
that
\[D(|T|)=D(T)\subseteq D(T^{\ast })=D(|T^{\ast }|).\]

Finally, for all $x\in D(T)$, we have
\[
\left\Vert |T^{\ast }|x\right\Vert =\left\Vert T^{\ast }x\right\Vert
\leq \left\Vert Tx\right\Vert =\left\Vert |T|x\right\Vert .
\]%

Therefore, according to Definition \ref{defn pos unbd schmud}, we
have $T^*T\geq TT^*$, as required.
\end{proof}

\begin{rema}
By Definition \ref{defn pos unbd schmud} above and some trivial
observations, $T^*T\geq TT^*$ clearly implies that $T$ is
hyponormal.
\end{rema}

Now, we prove the analogue of Lemma \ref{Kittaneh Lemma abos valu
hypo} for unbounded operators. The proof uses the Generalized
Cauchy-Schwarz Inequality for unbounded self-adjoint positive
operators.

\begin{thm}\label{hklmmm}
Let $T$ be a closed hyponormal operator. Then%
\[
|<Tx,x>|\leq <|T|x,x>\text{ for all }x\in D(T)\text{ .}
\]
\end{thm}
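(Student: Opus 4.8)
The plan is to establish the inequality $|\langle Tx,x\rangle| \leq \langle |T|x,x\rangle$ for a closed hyponormal operator $T$ by combining the hyponormality bound $\|T^*x\| \leq \|Tx\|$ with the Generalized Cauchy--Schwarz Inequality for the unbounded positive self-adjoint operator $|T| = (T^*T)^{1/2}$. The key observation, already visible in Proposition \ref{565}, is that $D(|T|) = D(T)$ and $\|\,|T|x\| = \|Tx\|$ for every $x \in D(T)$.

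First I would fix $x \in D(T)$ and rewrite $\langle Tx,x\rangle$ in a way that exposes $|T|$. The natural device is the polar-type identity $\langle Tx,x\rangle = \langle |T|x, (\text{something})\rangle$; more precisely, I expect to use the factorization through $|T|$. Since $|T|$ is a positive self-adjoint operator, the Generalized Cauchy--Schwarz Inequality states that for $x,y \in D(|T|)$,
\[
|\langle |T|x,y\rangle| \leq \langle |T|x,x\rangle^{1/2}\,\langle |T|y,y\rangle^{1/2}.
\]
Applying this with $y=x$ gives a bound in terms of $\langle |T|x,x\rangle$, but to reach $\langle Tx,x\rangle$ I need to replace one copy of $|T|^{1/2}x$ by a vector whose norm is controlled by $\langle |T|x,x\rangle^{1/2}$ and simultaneously recovers $\langle Tx,x\rangle$.

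The cleaner route I would pursue is the following. Write $\langle Tx,x\rangle$ and estimate $|\langle Tx,x\rangle| \leq \|T^*x\|\,\|x\|$ is too crude; instead I would split via $|T|^{1/2}$. On $D(T)=D(|T|)\subseteq D(|T|^{1/2})$ one has $\langle |T|x,x\rangle = \||T|^{1/2}x\|^2$. I expect to produce an auxiliary bounded operator $C$ with $\|C\|\leq 1$ such that $T = C|T|$ in the appropriate sense (the polar decomposition, with $C$ a partial isometry), so that $\langle Tx,x\rangle = \langle C|T|x,x\rangle = \langle |T|^{1/2}x,\, C^*|T|^{1/2}x\rangle$ after distributing $|T| = |T|^{1/2}|T|^{1/2}$ and using that $C$ commutes appropriately with the square root on the relevant subspace. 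Then Cauchy--Schwarz in the ordinary Hilbert space inner product gives
\[
|\langle Tx,x\rangle| \leq \||T|^{1/2}x\|\,\|C^*|T|^{1/2}x\| \leq \||T|^{1/2}x\|^2 = \langle |T|x,x\rangle,
\]
where the contractivity $\|C^*\|\leq 1$ is used in the middle step.

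The main obstacle I anticipate is the rigorous handling of domains and of the polar decomposition in the unbounded setting: one must verify that $|T|^{1/2}x \in D(|T|^{1/2})$, that the partial isometry $C$ from the polar decomposition $T = C|T|$ genuinely satisfies $C^*C$ acting as the identity on $\overline{\ran|T|}$, and that the algebraic manipulation $\langle C|T|^{1/2}|T|^{1/2}x,x\rangle = \langle |T|^{1/2}x, C^*|T|^{1/2}x\rangle$ is justified (this needs $|T|^{1/2}x$ in the domain of the adjoint of the relevant factor, plus self-adjointness of $|T|^{1/2}$). I would lean on the fact that $T$ is closed, so the polar decomposition is available and $|T|$ is a bona fide positive self-adjoint operator, and on Proposition \ref{565} to guarantee the inclusion $D(T)\subseteq D(T^*)$ that underlies hyponormality; the Generalized Cauchy--Schwarz step itself is then the routine part.
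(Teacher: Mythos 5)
There is a genuine gap, and it sits exactly where hyponormality has to do its work. Your central identity $\langle Tx,x\rangle=\langle |T|^{1/2}x,\,C^{*}|T|^{1/2}x\rangle$ requires the partial isometry $C=U$ from the polar decomposition to commute with $|T|^{1/2}$ (so that $U|T|^{1/2}|T|^{1/2}=|T|^{1/2}U|T|^{1/2}$). That commutation holds only for quasinormal $T$; in general one has $U f(|T|)=f(|T^{*}|)U$, so the correct manipulation yields $\langle Tx,x\rangle=\langle U|T|^{1/2}x,\,|T^{*}|^{1/2}x\rangle$, and Cauchy--Schwarz then produces the factor $\||T^{*}|^{1/2}x\|^{2}=\langle |T^{*}|x,x\rangle$ --- not $\langle |T|x,x\rangle$. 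To finish you must still prove $\langle |T^{*}|x,x\rangle\leq\langle |T|x,x\rangle$, i.e.\ $|T^{*}|\leq |T|$, which is obtained from $TT^{*}\leq T^{*}T$ (Proposition \ref{565}) together with the Heinz inequality. This is precisely the step the paper performs (via the generalized Cauchy--Schwarz inequality for the positive operator $|T|$ applied to $\langle |T|x,U^{*}x\rangle$, the identity $U|T|U^{*}=|T^{*}|$, and Heinz), and it is the step your sketch skips by pushing $C^{*}$ past $|T|^{1/2}$ and then invoking only $\|C^{*}\|\leq 1$.

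A quick sanity check confirms the argument cannot be repaired as written: your final chain uses hyponormality only for the domain inclusion $D(T)\subseteq D(T^{*})$, so on a finite-dimensional space it would prove $|\langle Tx,x\rangle|\leq\langle |T|x,x\rangle$ for \emph{every} matrix. But for the nilpotent matrix $T=\bigl(\begin{smallmatrix}0&1\\0&0\end{smallmatrix}\bigr)$ one has $|T|=\bigl(\begin{smallmatrix}0&0\\0&1\end{smallmatrix}\bigr)$, and $x=(2,1)$ gives $|\langle Tx,x\rangle|=2>1=\langle |T|x,x\rangle$. So the inequality genuinely needs $\|T^{*}x\|\leq\|Tx\|$ in an operator-theoretic way, and the only place it can enter your scheme is through the comparison $|T^{*}|\leq|T|$ that you omitted.
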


\begin{proof}
Let $T=U|T|$ be the polar decomposition of $T$ where $U$ is partial
isometry. Remember that (see e.g. \cite{Kat})
\[
|T^{\ast }|=U|T|U^{\ast }
\]%
and%
\[
U^{\ast }U|T|=|T|.
\]%
By Proposition \ref{565}, $TT^{\ast }\leq T^{\ast }T.$ Hence by
Heinz Inequality ("unbounded" version), we infer that $|T^*|\leq
|T|$.

Now, to show the required inequality, let $x\in D(T)$.  Then, we
have
\[
\begin{array}{lll}
|<Tx,x>|^{2} & = & |<U|T|x,x>|^{2} \\
& = & |<|T|x,U^{\ast }x>|^{2} \\
& \leq  & <|T|x,x><|T|U^{\ast }x,U^{\ast }x> \\
& = & <|T|x,x><U|T|U^{\ast }x,x> \\
& = & <|T|x,x><|T^*|x,x> \\
& \leq  & <|T|x,x><|T|x,x> \text{(as $T$ is hyponormal)}.%
\end{array}%
\]

Accordingly,
\[
|<Tx,x>|\leq <|T|x,x>,
\]
as required.
\end{proof}

We are ready to give the "unbounded" analogue of Theorem \ref{Main
THM}.

\begin{thm}\label{Reid co-hyponormal ultimate generalization}
Let $K$ be a bounded operator and let $A$ be a non-necessarily
bounded self-adjoint positive
operator such that $(AK)^{\ast }$ is hyponormal. Then%
\[
|<AKx,x>|\leq \left\Vert K\right\Vert <Ax,x>
\]%
for all $x\in D(A)$.
\end{thm}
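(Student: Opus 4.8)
The plan is to replicate the structure of the proof of Theorem \ref{Main THM} (the bounded case), replacing the operator inequality $|(AK)^*|\le \|K\|A$ by its unbounded counterpart in the sense of Definition \ref{defn pos unbd schmud}, and feeding the hyponormality of $(AK)^*$ into Theorem \ref{hklmmm} in place of Lemma \ref{Kittaneh Lemma abos valu hypo}. The case $K=0$ is trivial, so I assume $K\neq 0$. Compared with the bounded setting, the whole difficulty is domain bookkeeping.

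First I would settle the adjoint algebra. Since $K$ is bounded and $A=A^*$, a direct computation shows $K^*A\subseteq (AK)^*$; in particular $D(A)\subseteq D((AK)^*)$ and $(AK)^*x=K^*Ax$ for every $x\in D(A)$. Moreover $(AK)^*$ is automatically closed (being an adjoint) and is hyponormal by hypothesis, so $D(|(AK)^*|)=D((AK)^*)$ with $\||(AK)^*|x\|=\|(AK)^*x\|$, and Theorem \ref{hklmmm} is applicable to $T=(AK)^*$.

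The heart of the proof is the inequality $|(AK)^*|\le \|K\|A$, which I would obtain in two steps. First, for $x\in D(A)$ one has $\||(AK)^*|x\|=\|(AK)^*x\|=\|K^*Ax\|\le \|K\|\,\|Ax\|$; since also $D(\|K\|A)=D(A)\subseteq D(|(AK)^*|)$, Definition \ref{defn pos unbd schmud}, read for the squares (whose positive square roots are exactly $|(AK)^*|$ and $\|K\|A$), yields $|(AK)^*|^2\le \|K\|^2A^2$. Second, applying the unbounded Heinz inequality to pass to positive square roots gives $|(AK)^*|\le \|K\|A$, that is $<|(AK)^*|x,x>\le \|K\|<Ax,x>$ for all $x\in D(A)$.

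Finally I would assemble the pieces. For $x\in D(A)$ for which $AKx$ is meaningful (i.e. $x\in D(AK)$ as well), the identity $<AKx,x>=\overline{<(AK)^*x,x>}$ gives $|<AKx,x>|=|<(AK)^*x,x>|$; then Theorem \ref{hklmmm} followed by the inequality just established yields
\[|<AKx,x>|=|<(AK)^*x,x>|\le <|(AK)^*|x,x>\le \|K\|<Ax,x>,\]
as desired. The step I expect to be the main obstacle is the passage $|(AK)^*|^2\le \|K\|^2A^2\Rightarrow |(AK)^*|\le \|K\|A$: one must keep in mind that the comparison $\le$ is the one of Definition \ref{defn pos unbd schmud} (a statement about form domains and square roots, not about the operators themselves), and that the unbounded Heinz inequality is invoked with the correct domain inclusions. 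The preliminary identification $(AK)^*=K^*A$ on $D(A)$ is precisely what keeps all the relevant vectors inside the right domains throughout.
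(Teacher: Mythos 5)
Your proposal follows essentially the same route as the paper: the inclusion $K^*A\subseteq (AK)^*$, the norm estimate $\Vert (AK)^*x\Vert=\Vert K^*Ax\Vert\leq \Vert K\Vert\,\Vert Ax\Vert$ giving $|(AK)^*|^2\leq \Vert K\Vert^2A^2$ in the sense of Definition \ref{defn pos unbd schmud} (this is exactly the content of Lemma \ref{KK*leq I implies AK(AK)* leq I Lemma UNB}), the unbounded Heinz inequality to pass to square roots, and Theorem \ref{hklmmm} applied to $T=(AK)^*$. The only point you leave hanging is that the conclusion must hold for \emph{all} $x\in D(A)$, not merely those for which ``$AKx$ is meaningful'': the paper closes this at the outset via $D(A)\subseteq D((AK)^*)\subseteq D((AK)^{**})=D(AK)$, using the hyponormality of $(AK)^*$ together with the closedness of $AK$ --- ingredients you already have in hand.
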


The proof of the preceding theorem is based on the following simple
lemma.

\begin{lem}\label{KK*leq I implies AK(AK)* leq I Lemma UNB}
Let $A$ be a non necessarily bounded self-adjoint operator. Let
$K\in B(H)$ be such that $KK^*\leq \alpha I$ for some $\alpha>0$.
Then
\[AK(AK)^*\leq \alpha A^2\]
whenever $AK$ is densely defined.
\end{lem}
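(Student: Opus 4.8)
The plan is to unwind the operator inequality $AK(AK)^*\leq \alpha A^2$ through Definition \ref{defn pos unbd schmud}, since both sides are positive self-adjoint: the right-hand side because $A$ is self-adjoint, and the left-hand side because $(AK)^*$ is closed (being an adjoint) and—once we know its domain contains the dense set $D(A)$—densely defined, so that $|(AK)^*|$ is the well-defined positive square root of $AK(AK)^*$. Passing to square roots, the right-hand side gives $(\alpha A^2)^{1/2}=\sqrt\alpha\,|A|$ with $D(|A|)=D(A)$ and $\||A|x\|=\|Ax\|$, while the left-hand side gives $(AK(AK)^*)^{1/2}=|(AK)^*|$ with $D(|(AK)^*|)=D((AK)^*)$ and $\||(AK)^*|x\|=\|(AK)^*x\|$ (the last identity coming from the polar decomposition of the closed operator $(AK)^*$). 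Thus proving $AK(AK)^*\leq \alpha A^2$ reduces to the two statements $D(A)\subseteq D((AK)^*)$ and $\|(AK)^*x\|\leq \sqrt\alpha\,\|Ax\|$ for every $x\in D(A)$.

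To obtain the domain inclusion I would first identify $(AK)^*$ on $D(A)$. For $y\in D(A)=D(A^*)$ and any $u\in D(AK)$, since $K$ is bounded one computes
\[<AKu,y>=<Ku,A^*y>=<Ku,Ay>=<u,K^*Ay>,\]
so the functional $u\mapsto <AKu,y>$ is bounded on $D(AK)$. Hence $y\in D((AK)^*)$ with $(AK)^*y=K^*Ay$; in other words $(AK)^*\supseteq K^*A$, which in particular yields $D(A)\subseteq D((AK)^*)$. This is exactly where the standing assumption that $AK$ be densely defined is used, so that $(AK)^*$ makes sense as an operator in the first place.

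For the norm bound I would extract from $KK^*\leq \alpha I$ that $\|K^*x\|^2=<KK^*x,x>\leq \alpha\|x\|^2$, i.e. $\|K^*\|\leq \sqrt\alpha$. Combining this with $(AK)^*y=K^*Ay$ gives, for every $x\in D(A)$,
\[\||(AK)^*|x\|=\|(AK)^*x\|=\|K^*Ax\|\leq \|K^*\|\,\|Ax\|\leq \sqrt\alpha\,\|Ax\|=\sqrt\alpha\,\||A|x\|,\]
which is precisely the inequality $\||(AK)^*|x\|\leq \|(\alpha A^2)^{1/2}x\|$ demanded by Definition \ref{defn pos unbd schmud}. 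Together with the domain inclusion, this establishes $AK(AK)^*\leq \alpha A^2$.

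I expect no deep obstacle here; the only points requiring care are purely domain-theoretic. The first is the correct reading of the inequality for unbounded operators via Definition \ref{defn pos unbd schmud}, since there is no bounded ``passing to square roots'' available and one must argue directly with $|A|$, $|(AK)^*|$ and their domains. The second is the verification $(AK)^*\supseteq K^*A$, where one should be mindful that equality of the two operators need not hold, whereas the inclusion—which is all that is needed—does, precisely because $K$ is bounded and $A$ is self-adjoint. Everything else is a routine bounded estimate.
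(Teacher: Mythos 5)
Your proposal is correct and follows essentially the same route as the paper's own proof: both reduce the claim to the definition of the order relation for unbounded positive operators by passing to the square roots $|(AK)^*|$ and $\sqrt{\alpha}\,|A|$, establish the domain inclusion via $K^*A\subseteq (AK)^*$, and finish with the estimate $\|(AK)^*x\|=\|K^*Ax\|\leq\sqrt{\alpha}\,\|Ax\|$ on $D(A)$. If anything you are slightly more careful than the paper, which only asserts the inclusion $D(K^*A)\subseteq D((AK)^*)$, tacitly uses that $AK$ is closed so that $|(AK)^*|^2=AK(AK)^*$, and writes $\|K\|$ where $\sqrt{\alpha}$ is meant.
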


\begin{proof}
Since $AK$ is closed as $A$ is and $K$ is bounded, we clearly have
\[
\left\vert \left( AK\right) ^{\ast }\right\vert ^{2}=AK\left(
AK\right) ^{\ast }\leq  \alpha A^{2}.
\]%
To show the required inequality, notice first that
\[
D\left(  \alpha A\right) =D\left( A\right) =D\left( K^{\ast
}A\right) \subseteq D\left( \left( AK\right) ^{\ast }\right)
=D\left( \left\vert \left( AK\right) ^{\ast }\right\vert \right).
\]%
According to Definition \ref{565}, we need only check that
$\left\Vert \left\vert \left( AK\right) ^{\ast }\right\vert
x\right\Vert \leq \sqrt{\alpha} \left\Vert Ax\right\Vert $ for all
$x\in D\left( A\right)$. So, let $x\in D\left( A\right)$. Then
\[K^{\ast }Ax=%
\left( AK\right) ^{\ast }x.\] Hence for all $x\in D(A)$:
\[
\begin{array}{lll}
\left\Vert \left\vert \left( AK\right) ^{\ast }\right\vert
x\right\Vert ^{2}
& = & \left\Vert \left( AK\right) ^{\ast }x\right\Vert ^{2} \\
& = & \left\Vert K^{\ast }Ax\right\Vert ^{2} \\
& \leq & \left\Vert K\right\Vert ^{2}\left\Vert Ax\right\Vert ^{2}
\end{array}
\]
or simply $\left\vert \left( AK\right) ^{\ast }\right\vert \leq
\left\Vert K\right\Vert A$ (where obviously $\alpha=\|K\|^2$).
\end{proof}

We are ready to give a proof of Theorem \ref{Reid co-hyponormal
ultimate generalization}:

\begin{proof}
First, since $(AK)^{\ast }$ is hyponormal, we clearly have
\[D(A)\subseteq D((AK)^*)\subset D((AK)^{**})=D(AK)\]
because $AK$ is also closed.

Now, the inequality is evident when $K=0$. So, assume that $K\neq
0$. It is then
clear that $\frac{K}{||K||}\neq 0$ satisfies%
\[
KK^{\ast }\leq \left\Vert K\right\Vert ^{2}I.
\]%
Lemma \ref{KK*leq I implies AK(AK)* leq I Lemma UNB} then yields
\[\left\vert \left( AK\right) ^{\ast }\right\vert ^{2}=AK\left( AK\right)
^{\ast }\leq \left\Vert K\right\Vert ^{2}A^{2}.\]

Therefore, for all $x\in
D\left( A\right) $%
\[
\begin{array}{lll}
|<AKx,x>| & = & |<x,\left( AK\right) ^{\ast }x>| \\
& = & \overline{|<\left( AK\right) ^{\ast }x,x>|} \\
& = & |<\left( AK\right) ^{\ast }x,x>| \\
& \leq  & <\left\vert \left( AK\right) ^{\ast }\right\vert x,x>|\text{ \ (}%
\left( AK\right) ^{\ast }\text{ is hyponormal).} \\
& \leq  & \left\Vert K\right\Vert <Ax,x>%
\end{array}%
\]
and this marks the end of the proof.
\end{proof}

In the end, we give the generalization of Halmos-Reid Inequality.
The proof, which uses a standard argument (cf.
\cite{halmos-book-1967} or \cite{Lin-Dragomir}), is more technical
in the unbounded case.

\begin{thm}
Let $K$ be a bounded operator, and let $A$ be an unbounded
self-adjoint positive operator such that $K^{\ast }A\subseteq AK,$
then
\[
|<AKx,x>|\leq r\left( K\right) <Ax,x>\text{ \ for all }x\in D\left(
A\right) .
\]%
where $r\left( K\right) $ denotes the spectral radius of $K.$
\end{thm}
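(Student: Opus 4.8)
The plan is to adapt Halmos's classical doubling argument to the present unbounded framework, the key point being that every quantity we estimate lives on the fixed domain $D(A)$, so the unboundedness of $A$ never forces us to bound $A$ as an operator. First I would unpack the hypothesis $K^{\ast}A\subseteq AK$. Since $K$ is bounded, $D(K^{\ast}A)=D(A)$, so the inclusion says precisely that for each $x\in D(A)$ one has $Kx\in D(A)$ and $AKx=K^{\ast}Ax$. Thus $D(A)$ is invariant under $K$, and an easy induction gives $K^{n}x\in D(A)$ together with the intertwining relation $AK^{n}x=(K^{\ast})^{n}Ax$ for all $n\geq 0$ and all $x\in D(A)$. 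Fixing $x\in D(A)$ and writing $a_{n}:=<AK^{n}x,x>$, this relation immediately yields $a_{n}=<Ax,K^{n}x>$; moreover for even indices $a_{2n}=<AK^{n}x,K^{n}x>=\|A^{1/2}K^{n}x\|^{2}\geq 0$, so all even-indexed terms are real and nonnegative.

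Next I would feed the Generalized Cauchy--Schwarz inequality for the positive self-adjoint operator $A$ (exactly the one invoked in the proof of Theorem \ref{hklmmm}) into these quantities. Applying it to the pair $x,K^{n}x\in D(A)$ gives $|a_{n}|^{2}=|<Ax,K^{n}x>|^{2}\leq <Ax,x><AK^{n}x,K^{n}x>$, that is $|a_{n}|^{2}\leq c\,a_{2n}$ with $c:=<Ax,x>$. Specializing to $n=1$ and to the powers $n=2^{j}$ (where $a_{2^{j}}\geq 0$) produces $|a_{1}|^{2}\leq c\,a_{2}$ and $a_{2^{j}}^{2}\leq c\,a_{2^{j+1}}$. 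A routine induction on $k$ then chains these into the single estimate $|a_{1}|^{2^{k}}\leq c^{2^{k}-1}a_{2^{k}}$.

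Finally I would convert the right-hand side into a spectral-radius bound. Using $a_{2^{k}}=<Ax,K^{2^{k}}x>$ and the ordinary Cauchy--Schwarz inequality, $a_{2^{k}}\leq \|Ax\|\,\|x\|\,\|K^{2^{k}}\|$, whence $|a_{1}|\leq c^{1-2^{-k}}(\|Ax\|\,\|x\|)^{2^{-k}}\|K^{2^{k}}\|^{2^{-k}}$. Letting $k\to\infty$ and invoking Gelfand's formula $r(K)=\lim_{k\to\infty}\|K^{2^{k}}\|^{1/2^{k}}$, the middle factor tends to $1$ and $c^{1-2^{-k}}\to c$, so $|<AKx,x>|=|a_{1}|\leq r(K)\,c=r(K)<Ax,x>$. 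The degenerate cases $x=0$ or $Ax=0$ are trivial, since then both sides vanish.

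The analytic heart of the argument---the doubling recursion and the Gelfand limit---is identical to the bounded Halmos--Reid proof, so the only genuine obstacle is the bookkeeping of domains: one must verify at each step that the vectors $K^{n}x$ remain in $D(A)$ and that the intertwining $AK^{n}x=(K^{\ast})^{n}Ax$ is legitimate, which is exactly what the hypothesis $K^{\ast}A\subseteq AK$ guarantees. Everything else reduces to the single fixed vector $x\in D(A)$, so the unboundedness of $A$ causes no trouble.
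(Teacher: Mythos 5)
Your proof is correct, and you have handled the one genuinely delicate point --- that $K^{\ast}A\subseteq AK$ forces $K^{n}x\in D(A)$ and $AK^{n}x=K^{\ast n}Ax$ for every $x\in D(A)$ --- exactly as needed. The analytic core coincides with the paper's: your recursion $|a_{n}|^{2}\leq <Ax,x>\,a_{2n}$ iterated along powers of $2$ is precisely the paper's ``key result'' $|<AKx,x>|\leq <AK^{2^{n}}x,x>^{1/2^{n}}<Ax,x>^{(2^{n}-1)/2^{n}}$, and the paper's inductive step (writing $A=A^{1/2}A^{1/2}$ and applying Cauchy--Schwarz to $<A^{1/2}Kx,A^{1/2}x>$) is the same computation as your appeal to the generalized Cauchy--Schwarz inequality for the positive form $(u,v)\mapsto <Au,v>$. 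Where you genuinely diverge is in the final estimate of $a_{2^{k}}=<AK^{2^{k}}x,x>$: the paper notes that $(AK^{2^{n}})^{\ast}$ is hyponormal and invokes its unbounded Reid inequality (Theorem \ref{Reid co-hyponormal ultimate generalization}) to get the sharp bound $a_{2^{n}}\leq \|K^{2^{n}}\|<Ax,x>$, so that $|<AKx,x>|\leq \|K^{2^{n}}\|^{1/2^{n}}<Ax,x>$ already holds at every finite stage; you instead use the crude bound $a_{2^{k}}\leq \|Ax\|\,\|x\|\,\|K^{2^{k}}\|$ from ordinary Cauchy--Schwarz and let the spurious factor $(\|Ax\|\,\|x\|)^{1/2^{k}}$ wash out in the Gelfand limit. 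Your route is more elementary and self-contained, since it bypasses Theorem \ref{Reid co-hyponormal ultimate generalization} (and hence Lemma \ref{KK*leq I implies AK(AK)* leq I Lemma UNB} and Theorem \ref{hklmmm}) altogether, at the price of giving no useful inequality for any finite $k$; the paper's route deliberately exhibits the Halmos--Reid inequality as a consequence of its new co-hyponormal Reid inequality, which is the structural point the authors wish to make.
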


\begin{proof}
Since $K^{\ast }A\subseteq AK$, we can get by induction that
\[K^{\ast n}A\subseteq AK^{n} \text{ or merely } (AK^{n})^{*}\subseteq AK^{n}
\]%
for all $n$. Hence $\left( AK^{n}\right) ^{\ast }$ is hyponormal. We
also observe that as $A^{\frac{1}{2}}A^{\frac{1}{2}}=A$, then
$D(A)\subset D(A^{\frac{1}{2}})$.

Now, let us prove the following key result:
\[
|<AKx,x>|\leq
<AK^{2n}x,x>^{\frac{1}{2^{n}}}<Ax,x>^{\frac{2^{n}-1}{2^{n}}}
\]
We use a proof by induction. For $n=1$, we have for all $x\in D(A)$:
\[
\begin{array}{lll}
|<AKx,x>| & \leq & |<A^{\frac{1}{2}}A^{\frac{1}{2}}Kx,x>| \\
& = & |<A^{\frac{1}{2}}Kx,A^{\frac{1}{2}}x>|\text{ \ \ because }A^{\frac{1}{2%
}}Kx\in D(A^{\frac{1}{2}}) . \\
& \leq & \left\Vert A^{\frac{1}{2}}Kx\right\Vert \left\Vert A^{\frac{1}{2}%
}x\right\Vert \\
& = &
<A^{\frac{1}{2}}Kx,A^{\frac{1}{2}}Kx>^{\frac{1}{2}}<Ax,x>^{\frac{1}{2}}
\\
& = &
<A^{\frac{1}{2}}A^{\frac{1}{2}}Kx,Kx>^{\frac{1}{2}}<Ax,x>^{\frac{1}{2}}
\\
& = & <AKx,Kx>^{\frac{1}{2}}<Ax,x>^{\frac{1}{2}} \\
& = & <K^{\ast }AKx,x>^{\frac{1}{2}}<Ax,x>^{\frac{1}{2}} \\
& = & <AK^{2}x,x>^{\frac{1}{2}}<Ax,x>^{\frac{1}{2}}%
\end{array}%
\]%
Using a similar argument we can prove it for $n+1$ if it holds for
$n$.

Therefore, for all $n$ (and all $x\in D(A)$)
\[
|<AKx,x>|\leq
<AK^{2n}x,x>^{\frac{1}{2^{n}}}<Ax,x>^{\frac{2^{n}-1}{2^{n}}}.
\]%
Since $\left( AK^{2n}\right)^{\ast }$ is hyponormal, we might apply
Theorem \ref{Reid co-hyponormal ultimate generalization} to get
\[
\begin{array}{lll}
|<AKx,x>| & \leq  & <AK^{2n}x,x>^{\frac{1}{2^{n}}}<Ax,x>^{\frac{2^{n}-1}{%
2^{n}}} \\
& \leq  & \left\Vert K^{2n}\right\Vert ^{\frac{1}{2^{n}}}<Ax,x>^{\frac{1}{%
2^{n}}}<Ax,x>^{\frac{2^{n}-1}{2^{n}}} \\
& = & \left\Vert K^{2n}\right\Vert ^{\frac{1}{2^{n}}}<Ax,x>%
\end{array}%
\]%
whichever $n$. Passing to the limit finally yields
\[
|<AKx,x>|\leq r\left( K\right) <Ax,x>\text{ \ for all }x\in D\left(
A\right),
\]
as needed.
\end{proof}

\begin{cor}
Let $K$ be a bounded operator, and let $A$ be an unbounded
self-adjoint positive operator such that $AK$ is self-adjoint$,$
then
\[
|<AKx,x>|\leq r\left( K\right) <Ax,x>\text{ \ for all }x\in D\left(
A\right) .
\]
\end{cor}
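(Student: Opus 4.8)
The plan is to deduce this corollary directly from the theorem that precedes it, whose hypothesis is the inclusion $K^{\ast}A\subseteq AK$. So the only thing I really need to establish is that the self-adjointness of $AK$ already forces $K^{\ast}A\subseteq AK$; once that is in hand, the estimate involving $r(K)$ is immediate.

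First I would invoke the standard adjoint rule for a product of a densely defined (here unbounded) operator with a bounded factor sitting on the right. Writing $AK$ as $TS$ with $T=A$ and $S=K\in B(H)$, one has in general $(TS)^{\ast}\supseteq S^{\ast}T^{\ast}$, where equality may fail only because the left-hand domain can be strictly larger. Since $A$ is self-adjoint, $A^{\ast}=A$, and thus
\[(AK)^{\ast}\supseteq K^{\ast}A.\]
Note that $AK$ is densely defined, as every self-adjoint operator is, so all the adjoints above make sense. Concretely, for $y\in D(A)$ and any $x\in D(AK)$ one checks $<AKx,y>=<Kx,Ay>=<x,K^{\ast}Ay>$, which places $y$ in $D((AK)^{\ast})$ with $(AK)^{\ast}y=K^{\ast}Ay$.

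Next, the hypothesis that $AK$ is self-adjoint means $AK=(AK)^{\ast}$. Combining this with the inclusion just obtained gives $AK\supseteq K^{\ast}A$, that is,
\[K^{\ast}A\subseteq AK,\]
which is exactly the assumption of the preceding theorem. Applying that theorem to the pair $A,K$ (the positivity of $A$ being already granted here) therefore yields $|<AKx,x>|\leq r(K)<Ax,x>$ for all $x\in D(A)$, completing the argument.

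I expect the only delicate point to be the bookkeeping of domains in the adjoint inclusion $(AK)^{\ast}\supseteq K^{\ast}A$: one must use the ``$\supseteq$'' version of the product rule valid when the bounded operator lies on the \emph{right} of the product, rather than the equality $(KA)^{\ast}=A^{\ast}K^{\ast}$ that holds when the bounded operator is on the left. Fortunately the inclusion is all that is needed, since self-adjointness of $AK$ converts it into the required $K^{\ast}A\subseteq AK$ without any appeal to equality of domains.
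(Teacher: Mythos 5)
Your proof is correct and follows essentially the same route as the paper: the paper's proof is precisely the chain $K^{\ast}A\subseteq (AK)^{\ast}=AK$ followed by an appeal to the preceding theorem. You have merely spelled out the domain bookkeeping behind the inclusion $(AK)^{\ast}\supseteq K^{\ast}A$, which the paper leaves implicit.
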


\begin{proof}
The proof simply follows from
\[K^*A\subset (AK)^*=AK.\]
\end{proof}

\section{Conclusion}

The new bounded version of Reid Inequality is equivalent to a bunch
of other properties. Indeed, since we have shown that it is
equivalent to the fact that the square root is increasing on the set
of positive operators, we may call on
\cite{Lin-Reid-Furuta-inequality} to list other equivalent
conditions.

\section*{Acknowledgement}

We warmly thank Professor J. Stochel for communicating Lemma
\ref{Lemma MAIN Stochel improvement LEMMA} to the corresponding
author.


\begin{thebibliography}{1}

\bibitem{Dehimi-Mortad-II}
S. Dehimi, M. H. Mortad, \textit{Right (or left) invertibility of
bounded and unbounded operators and applications to the spectrum of
products}, Complex Anal. Oper. Theory (to appear).
DOI:10.1007/s11785-017-0687-z.

\bibitem{halmos-book-1967}
P. R. Halmos, A Hilbert space problem book. \textit{D. Van Nostrand
Co., Inc.}, Princeton, N.J.-Toronto, Ont.-London 1967.

\bibitem{Kat}
T.~Kato, Perturbation Theory for Linear Operators,{\textit{
Springer}}, 1980 (2nd edition).

\bibitem{Kitanneh-REid-et al}
F. Kittaneh, \textit{Notes on some inequalities for Hilbert space
operators}, Publ. Res. Inst. Math. Sci., 24/2 (1988), 283-293.

\bibitem{Lin-Reid-Furuta-inequality}
C.-S. Lin,  \textit{Inequalities of Reid type and Furuta}, Proc.
Amer. Math. Soc., \textbf{129/3} (2001) 855-859.

\bibitem{Lin-Dragomir}
C.-S. Lin, S. S. Dragomir, \textit{On high-power operator
inequalities and spectral radii of operators}, Publ. Res. Inst.
Math. Sci., \textbf{42/2} (2006) 391-397.

\bibitem{Mortad-Reid-Normal! FIRST VERSION}
M. H. Mortad, \textit{An Improvement of Reid Inequality},
arXiv:1704.05104v1.

\bibitem{Reid-Inequality}
W. T. Reid, \textit{Symmetrizable completely continuous linear
transformations in Hilbert space,} Duke Math. J., \textbf{18 }(1951)
41-56.

\bibitem{SCHMUDG-book-2012}
K. Schm\"{u}dgen, \textit{Unbounded Self-adjoint Operators on
Hilbert Space,} Springer GTM {\bf 265}  (2012).

\bibitem{Young-BOOK-HIlbert}
N. Young, An introduction to Hilbert space. Cambridge Mathematical
Textbooks. \textit{Cambridge University Press}, Cambridge, 1988.

\end{thebibliography}
\end{document}